\newtheorem{theorem}{Theorem}[section]
\newtheorem{lemma}[theorem]{Lemma}
\newtheorem{remark}{Remark}[section]
\newtheorem{corollary}{Corollary}[section]
\newtheorem{mydef}{Definition}
\newcommand{\grad}{\nabla}
\newcommand{\mymargin}[1]{\marginpar{\color{red}\tiny\ttfamily#1}}
\renewcommand{\mymargin}[1]{}
\title{\LARGE \bf Complexity of Ten Decision Problems in\\ Continuous Time Dynamical Systems}
\author{Amir Ali Ahmadi, Anirudha Majumdar, and Russ Tedrake
\thanks{Amir Ali Ahmadi is with the Krannert School of Management
at Purdue University and currently a Goldstine Fellow at the
Department of Business Analytics and Mathematical Sciences of the
IBM Watson Research Center; Email: \texttt{a\_a\_a@mit.edu}.
Anirudha Majumdar and Russ Tedrake are with the Computer Science and Artificial Intelligence Laboratory (CSAIL) at the Massachusetts Institute of Technology. Email: \texttt \{anirudha,russt\}@mit.edu.} }
\begin{document}

\maketitle
\thispagestyle{empty}
\pagestyle{empty}

\begin{abstract}
We show that for continuous time dynamical systems described
by polynomial differential equations of
modest degree (typically equal to three), the following decision
problems which arise in numerous areas of systems and control
theory cannot have a polynomial time (or even pseudo-polynomial
time) algorithm unless P=NP: local attractivity of an equilibrium point, stability of an equilibrium point in the sense of Lyapunov, boundedness of trajectories, convergence of all trajectories in a ball to a given equilibrium point, existence of a quadratic Lyapunov function, invariance of a ball, invariance of a quartic
semialgebraic set under linear dynamics, local collision avoidance, and existence of a stabilizing control law. We also extend our earlier NP-hardness proof of testing local asymptotic stability for polynomial vector fields to the case of trigonometric differential equations of degree four.

\end{abstract}


\section{Introduction}

Polynomial and trigonometric differential equations appear ubiquitously in a variety of application domains including robotics, economics, mathematical biology, and chemical engineering, among others~\cite{Braun93}. The equations of motion for most robotic systems for example can be described by the familiar \emph{manipulator equations} \cite{Murray94} which give rise to systems of differential equations that are a mixture of polynomial and trigonometric terms in the state variables. In mathematical biology and economics, polynomial differential equations such as the Lotka-Volterra model and its variants \cite{Freedman80} are used to model population dynamics and competition among entities in an economy. The dynamics of many chemical processes \cite{Bequette98} are also naturally modeled by polynomial differential equations. Aside from these specific examples, differential equations in numerous application domains are commonly \emph{approximated} as polynomials.

While mature computational tools exist for the numerical solution of such differential equations \cite{Atkinson09}, in most of the application domains described above, one is \emph{not} typically interested in \emph{particular} solutions of the system. Rather, \emph{qualitative properties} of the differential equations are of central importance. For example, one may be interested in the safety of a robot performing a certain dynamic task, or in determining if the population of certain species diminishes below a critical threshold. The former example is related to the \emph{stability} of the control system employed by the robot while the latter can be addressed by defining an ``acceptable'' set of population numbers and asking whether this set is \emph{invariant} (i.e. if the populations start off in this set, will they always remain within the set?). In a similar vein, one can ask if trajectories in a model of epidemic spread remain \emph{bounded}. 

The study of such qualitative properties of differential equations has an extensive literature and numerous algorithms have been proposed for addressing these questions computationally. However, for all but the simplest cases (e.g. the case of linear systems), the problems still lack satisfactory (i.e. exact and efficient) algorithms. This observation motivates the study of the fundamental computational complexity of these problems in order to establish theoretical bounds on the efficiency of algorithms that attempt to answer these questions. Such complexity results may play an important role in shaping the search for practical algorithms for these problems by limiting the kinds of algorithms one can possibly hope to obtain. Further, by understanding exactly where the complexity of these problems stems from, we can hope to find \emph{approximations} and \emph{relaxations} that are more amenable to efficient solutions while still maintaining practical relevance.

Questions of complexity related to qualitative properties of differential equations have long been of theoretical interest. A natural question one can ask is if the stability of a system of polynomial differential equations can be decided by a Turing machine in finite time. In \cite{Arnold_Problems_for_Math}, Arnold made a well-known conjecture that the contrary is true; i.e. the question is undecidable. To the authors' knowledge, even though some variants of the question have been studied and answered \cite{Undecidability_vec_fields_survey,Costa_Doria_undecidabiliy}, the question in its original form is so far unresolved. Although the results in this paper do not resolve Arnold's question, they provide lower bounds on the computational complexity of deciding local asymptotic stability and several similar and related problems.

The primary challenge in establishing such complexity results lies in relating the properties of the \emph{continuous} solutions of polynomial and trigonometric differential equations to the \emph{combinatorial} problems for which complexity results have been previously established. Explicitly mapping trajectories of a system (which typically one does not have access to exactly) to objects in combinatorial problems seems to be a hopeless approach. The main idea that allows us to by-pass this apparent challenge is to relate the combinatorial problem to properties of \emph{Lyapunov functions} that prove stability/invariance of differential equations. All the results in this paper exploit this idea in one way or another. 


The organization of the paper is as follows. After stating some preliminaries in Section~\ref{sec:prelim}, we show in Section \ref{sec:trig} that deciding local asymptotic stability for trigonometric polynomials of degree four is strongly NP-hard. While this result is an extension of the results presented in \cite{AAA_ACC12_Cubic_Difficult} (which proves the corresponding result for cubic polynomial vector fields), the decision problem is of independent interest, particularly in the field of robotics. This is due to the fact that most mechanical systems can be modeled by the \emph{manipulator equations}, which result in vector fields whose degrees are dominated by trigonometric terms. In Section \ref{sec:poly}, we prove that the following decision problems are strongly NP-hard for polynomial vector fields of degree $d$:

\begin{itemize}
 \item Invariance of a ball ($d=3$),
 \item Invariance of a basic semialgebraic set defined by a quartic polynomial ($d=1$),
\item Inclusion of the unit ball in the region of attraction of an equilibrium point ($d=3$),
\item Local attractivity of an equilibrium point ($d=3$),
 \item Stability of an equilibrium point in the sense of Lyapunov ($d=4$), 
 \item Boundedness of trajectories ($d=3$),
 \item Existence of a quadratic Lyapunov function ($d=3$),
 \item Local collision avoidance ($d=4$),
 \item Existence of a stabilizing controller ($d=3$).

\end{itemize}
These notions are all formally defined in Section \ref{sec:poly}. The input to these problems is an ordered list of coefficients (expressed as rational numbers) defining the polynomial or trigonometric vector field. Establishing NP-hardness of these problems implies that unless P=NP, it is not possible to provide an algorithm that can have a running time bounded by a polynomial in the number of bits required to represent the input. Further, all the NP-hardness results in this paper are in the \emph{strong} sense (as opposed to weakly NP-hard problems like KNAPSACK or SUBSET SUM). This implies that the problems remain NP-hard even when the bit length of the coefficients (i.e. the input) is $O(log(n))$ (here, $n$ is the dimension of the state space). Unless P=NP, even pseudo-polynomial time algorithms cannot exist for strongly NP-hard problems; see  \cite{GareyJohnson_Book} for more details and definitions. In particular, our results suggest that none of the numerous recent techniques for systems analysis based on convex optimization (e.g. in terms of linear programs, linear matrix inequalities, or sum of squares programs) can be exact, unless the size of the formulated optimization problems are exponential in the input.


We refer the reader interested in computational complexity in systems and control to the outstanding survey papers~\cite{Survey_CT_Computation},
\cite{Undecidability_vec_fields_survey},
\cite{Sontag_complexity_comparison},
\cite{BlTi_complexity_3classes}, \cite{BlTi1} and references therein.

\section{A few preliminaries on forms}\label{sec:prelim}

Many of the results in this paper will make use of \emph{homogeneous} polynomials. A multivariate polynomial $p:\mathbb{R}^n\rightarrow\mathbb{R}$ is homogeneous
(of degree $d$) if it satisfies $p(\lambda x)=\lambda^d p(x)$ for
all $x\in\mathbb{R}^n$ and all $\lambda\in\mathbb{R}$. This condition is equivalent to all monomials of $p$ having the same degree. A homogeneous polynomial is also called a \emph{form}. Observe that products of forms are again forms, and that the components of the gradient of a form are forms of one fewer degree. We will make frequent references to the following useful identity for homogeneous functions due to Euler:
$$p(x) = \frac{1}{d}x^T\grad p(x).$$
Here, $p$ is a homogeneous function of degree $d$ and $\nabla p$ denotes its gradient vector. The identity is easily derived by differentiating both sides of the above equation with respect to $\lambda$ and setting $\lambda = 1$.

The degree of a polynomial vector field $\dot{x} = f(x)$, with $f:\mathbb{R}^n\rightarrow\mathbb{R}^n$, is
defined to be the largest degree of the components of $f$. We say that the vector field $f$ is homogeneous if all components of $f$ are forms of the same degree. Finally, a form $p:\mathbb{R}^n\rightarrow\mathbb{R}$ is said to be \emph{positive definite} if $p(x)>0$ for all nonzero $x$ in $\mathbb{R}^n$.


\section{Complexity of deciding local asymptotic stability of trigonometric vector fields}
\label{sec:trig}

In this section, we prove that deciding local asymptotic stability of trigonometric vector fields of degree four is strongly NP-hard.

\begin{mydef}
The zero equilibrium point of a dynamical system $\dot{x} = f(x)$ is \emph{stable in the sense of Lyapunov} if $\forall \ \epsilon > 0$, $\exists \ \delta > 0$ such that $\|x(0)\| < \delta \implies \|x(t)\| < \epsilon, \forall t > 0.$ We say that the equlibrium point is \emph{locally asymptotically stable} if it is stable in the sense of Lyapunov and there exists $\epsilon > 0$ such that $\|x(0)\| < \epsilon \implies \lim_{t \to \infty} x(t) = 0$.
\end{mydef}


\begin{theorem}\label{thm:las.trig}
Given a trigonometric vector field of degree four, it is strongly
NP-hard to decide if it is locally asymptotically stable.
\end{theorem}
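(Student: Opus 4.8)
\emph{Proof proposal.} I would reduce from the problem of deciding positive definiteness of a quartic form, which is the combinatorial engine underlying the cubic result of \cite{AAA_ACC12_Cubic_Difficult}: that reduction produces, in polynomial time and with coefficients of bit length $O(\log n)$, a quartic form $p:\RR^n\rightarrow\RR$ which is \emph{either} positive definite (the YES instances) \emph{or} attains a strictly negative value at some nonzero point (the NO instances), and it is this promise gap that yields \emph{strong} NP-hardness. Given such a $p$, the plan is to build the trigonometric vector field $\dot x=g(x)$ with $g_i(x)=-\cos(x_i)\,\tfrac{\partial p}{\partial y_i}(\sin x_1,\dots,\sin x_n)$. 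Setting $\tilde p(x):=p(\sin x_1,\dots,\sin x_n)$, a one-line chain rule shows $g=-\nabla\tilde p$; moreover each $g_i$ is an integer combination of trigonometric monomials of the form $\cos(x_i)\sin(x_a)\sin(x_b)\sin(x_c)$, so $g$ has trigonometric degree four (the degree-four terms coming from pure fourth powers $y_i^4$ in $p$), its coefficients are those of $\nabla p$ and hence of bit length $O(\log n)$, and it is computable from $p$ in polynomial time. Everything then reduces to proving: the origin is locally asymptotically stable for $\dot x=g(x)$ if and only if $p$ is positive definite.

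For the ``if'' direction I would use $\tilde p$ as a Lyapunov function on $N:=\{x:\|x\|_\infty<\pi\}$. Along trajectories $\tfrac{d}{dt}\tilde p(x)=-\|\nabla\tilde p(x)\|^2\le 0$. On $N\setminus\{0\}$ some $x_i\ne 0$ has $\sin x_i\ne 0$, so $(\sin x_1,\dots,\sin x_n)\ne 0$ and $\tilde p(x)=p(\sin x)>0$; hence $\tilde p$ is positive definite on $N$, which already gives Lyapunov stability. On the smaller neighborhood $\|x\|_\infty<\pi/2$ one has $\cos x_i\ne 0$ for all $i$, so $\nabla\tilde p(x)=0$ forces $\nabla p(\sin x)=0$, whence by Euler's identity $4p(\sin x)=(\sin x)^T\nabla p(\sin x)=0$, and positive definiteness of $p$ gives $\sin x=0$, i.e. $x=0$. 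Thus $\tfrac{d}{dt}\tilde p<0$ on a punctured neighborhood of the origin, and a standard argument (a small compact sublevel set of $\tilde p$ is forward invariant, and the largest invariant subset on which $\tfrac{d}{dt}\tilde p$ vanishes is $\{0\}$) yields local asymptotic stability.

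For the ``only if'' direction I would argue the contrapositive: if $p$ is not positive definite then $p(y^0)<0$ for some $y^0\ne 0$, and by homogeneity $p(\lambda y^0)=\lambda^4 p(y^0)<0$ for all $\lambda>0$. Hence for small $\lambda>0$ the points $x^\lambda$ with $x^\lambda_i=\arcsin(\lambda y^0_i)$ are well defined, tend to $0$ as $\lambda\to 0^+$, and satisfy $\tilde p(x^\lambda)=p(\lambda y^0)<0=\tilde p(0)$. Since $g$ has bounded partial derivatives it is globally Lipschitz, so trajectories are complete; since $\tilde p$ is nonincreasing along them, the trajectory from $x^\lambda$ keeps $\tilde p\le p(\lambda y^0)<0$ forever, so by continuity of $\tilde p$ it cannot converge to the origin. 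As $x^\lambda$ lies arbitrarily close to the origin, the origin is not attractive, hence not locally asymptotically stable.

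The step I expect to be the main obstacle is not the Lyapunov analysis above (a homogeneous-systems computation transplanted to the trigonometric setting, the key insight being that $\tilde p=p\circ\sin$ keeps the vector-field degree at four while preserving the gradient/Lyapunov structure) but rather securing a \emph{strict} gap in the source problem: NO instances must be genuinely indefinite, not merely positive semidefinite. The semidefinite-but-not-definite case would be fatal, since $p$, and hence $\tilde p$, would then have a continuum of nonzero zeros accumulating at the origin, which are equilibria of $\dot x=-\nabla\tilde p$ arbitrarily close to $0$, leaving the origin merely marginally stable and breaking the equivalence. Extracting this strict gap from the reduction of \cite{AAA_ACC12_Cubic_Difficult} (its Motzkin--Straus / copositivity-type construction) and re-checking the $O(\log n)$ bit-length bookkeeping needed for \emph{strong} hardness is where most of the care goes.
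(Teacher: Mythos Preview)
Your construction is essentially the paper's: both take the quartic SAT-form from \cite{AAA_ACC12_Cubic_Difficult}, compose it with $\sin$ coordinatewise, and run the negative gradient flow. The paper spells this out directly from the ONE-IN-THREE 3SAT instance (homogenizing via an extra variable $y$) rather than passing through the abstract statement ``positive definiteness of quartic forms is hard,'' but the resulting vector field is the same object, and your ``if'' direction via the Lyapunov function $\tilde p$ matches theirs.

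There is, however, a genuine gap in your ``only if'' direction, and it is precisely the issue you flag---only you have misdiagnosed it. The quartic forms produced by the reduction in \cite{AAA_ACC12_Cubic_Difficult} are sums of squares and hence \emph{always} nonnegative. In a NO instance (a satisfiable formula) the form is positive semidefinite with a nontrivial zero at some $\bar y\in\{0,1\}^{n+1}$; it never attains a strictly negative value. The promise gap you posit (positive definite versus somewhere strictly negative) does not exist for these instances, so your argument via $\tilde p(x^\lambda)<0$ is inapplicable.

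The good news is that you already hold the correct argument and have merely read it backwards. If $p\ge 0$ has a nonzero zero $\bar y$, then $\bar y$ is a global minimizer, so $\nabla p(\bar y)=0$; by homogeneity $\nabla p(\lambda\bar y)=0$ for all $\lambda$, and the points $x^\lambda:=\arcsin(\lambda\bar y)$ are equilibria of $\dot x=-\nabla\tilde p$ accumulating at the origin. You state exactly this, but it does not ``break the equivalence''---it \emph{establishes} it. Equilibria arbitrarily close to the origin mean the origin is not locally attractive, hence not locally asymptotically stable, which is the desired conclusion for a NO instance. Replace the strictly-negative argument by this equilibrium argument (the paper does the same, phrased as ``a zero of $t_h$ near the origin is a fixed point'') and your proof goes through with no need to engineer any additional gap in the source problem.
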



The proof will be via a reduction from ONE-IN-THREE 3SAT, which is known to be NP-complete \cite{GareyJohnson_Book}. An instance of ONE-IN-THREE 3SAT consists of an expression made of conjunctions of clauses. Each clause is the logical OR of three literals, and each literal is either a variable or its negation. The problem is to decide if there is a boolean assignment\footnote{An assignment to a variable will denote an element from $\{0,1\}$ with $0$ corresponding to false and $1$ corresponding to true.} of the variables that results in the expression being true and each clause having \emph{exactly one} true literal.

Given an instance of ONE-IN-THREE 3SAT in the variables $b_1, \ldots, b_n$, we first construct a degree 4 trigonometric polynomial. To avoid introducing unnecessary notation, we present this construction on a single instance of ONE-IN-THREE 3SAT. This example should elucidate how the procedure works in the general case. Given an instance of ONE-IN-THREE 3SAT such as:
$$(b_1 \lor b_2 \lor b_3) \land (b_1 \lor \bar{b}_4 \lor b_5)$$
we construct the following quartic trigonometric polynomial:
\begin{flalign}
  t(x) = & \ \Sigma_{i=1}^{n} \sin^2(x_i)(1 - \sin(x_i))^2 \nonumber \\
       & + [\sin(x_1) + \sin(x_2) + \sin(x_3) - 1]^2  \nonumber \\
       & + [\sin(x_1) + (1 - \sin(x_4)) + \sin(x_5) - 1]^2 \nonumber
\end{flalign}

The first term in $t(x)$ is not specific to the particular instance under consideration and will always appear in our construction. The following terms are constructed by taking each clause appearing in the boolean formula and replacing a variable $b_i$ with $\sin(x_i)$ and substituting $+$ in place of $\lor$. If the negation of a variable appears in the clause, we replace it by $1 - \sin(x_i)$. Each resulting expression (corresponding to an individual clause) is subtracted by $1$ and then squared.

We then introduce a single new variable $y$ and define $z := [x, y]^T$. We construct a new trigonometric polynomial:
\begin{flalign}
t_h(z) = & \ \Sigma_{i=1}^{n} \sin^2(x_i)(\sin(y) - \sin(x_i))^2 \nonumber \\
       & + [\sin(x_1)\sin(y) + \sin(x_2)\sin(y) \dots \nonumber \\
       & + \sin(x_3)\sin(y) - \sin^2(y)]^2  \nonumber \\
       & + [\sin(x_1)\sin(y) + (\sin^2(y) - \sin(x_4)\sin(y)) \dots \nonumber \\
       & + \sin(x_5)\sin(y) - \sin^2(y)]^2 \nonumber
\end{flalign}
Here, we first replaced $\sin(x_i)$ with $\frac{\sin(x_i)}{\sin(y)}$ and then multiplied the entire resulting expression by $\sin^4(y)$. Observe that by doing so, $t_h$ becomes a homogeneous function of $\sin(z)$.


\begin{mydef}
A function $g(z)$ is \emph{locally positive definite} if there exists $\epsilon > 0$ such that $\|z\| < \epsilon, z\neq 0 \implies g(z) > 0$.
\end{mydef}

\begin{lemma}
The trigonometric polynomial $t_h(z)$ is locally positive definite if and only if the ONE-IN-THREE 3SAT instance it was derived from is unsatisfiable.
\end{lemma}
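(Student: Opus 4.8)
The plan is to peel off the trigonometric functions and reduce the statement to a question about a polynomial \emph{form}, which is then settled by a short combinatorial analysis. Write $z = (x_1,\dots,x_n,y)\in\mathbb{R}^{n+1}$ and $s(z) := (\sin x_1,\dots,\sin x_n,\sin y)$. By construction $t_h(z) = P(s(z))$, where $P:\mathbb{R}^{n+1}\to\mathbb{R}$ is obtained from $t_h$ by replacing each $\sin x_i$ with a fresh variable $u_i$ and $\sin y$ with a fresh variable $w$:
\[
\begin{aligned}
P(u,w) = {} & \sum_{i=1}^{n} u_i^2 (w - u_i)^2 \\
 & + \sum_{\text{clauses }c} \Bigl[\, \sum_{i:\, b_i\in c} u_i w \;+\; \sum_{i:\, \bar b_i\in c}(w^2 - u_i w) \;-\; w^2 \Bigr]^2 .
\end{aligned}
\]
The point of the homogenization step is precisely that $P$ is now a \emph{form} of degree $4$; moreover it is a sum of squares, so $P\ge 0$ everywhere, and $P(u,w)=0$ if and only if each squared term above vanishes at $(u,w)$.

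\emph{Step 1: $t_h$ is locally positive definite iff $P$ is positive definite.} On the cube $\{\|z\|_\infty<\pi/2\}$ the map $s$ is a homeomorphism onto $\{\|\cdot\|_\infty<1\}$ with $s(0)=0$, and $\|z\|_\infty<\pi$, $z\neq 0$ already forces $s(z)\neq 0$. So if $t_h$ is locally positive definite, with threshold $\eps<\pi/2$, then every small nonzero $(u,w)$ equals $s(z)$ for a unique small nonzero $z$, whence $P(u,w)=t_h(z)>0$; since $P$ is a form, positivity on a punctured neighborhood of the origin upgrades to positivity on all of $\mathbb{R}^{n+1}\setminus\{0\}$ (rescale any point into the neighborhood and use $P(\lambda v)=\lambda^4 P(v)$). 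Conversely, if $P$ is positive definite then $t_h(z)=P(s(z))>0$ for $0<\|z\|_\infty<\pi$. Thus $t_h$ is locally positive definite iff $P$ is positive definite, i.e.\ (using $P\ge 0$) iff the origin is the only zero of $P$.

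\emph{Step 2: the nonzero zeros of $P$ encode satisfying assignments.} Suppose $P(u,w)=0$. The terms $u_i^2(w-u_i)^2$ force $u_i\in\{0,w\}$ for every $i$. If $w=0$ this gives $u=0$, so there is no nonzero zero with $w=0$. If $w\neq 0$, homogeneity lets us rescale to $w=1$; the constraints become $u_i\in\{0,1\}$ for all $i$ together with $\sum_{i:\,b_i\in c}u_i+\sum_{i:\,\bar b_i\in c}(1-u_i)=1$ for every clause $c$. Reading $u_i$ as the truth value of $b_i$, this system says exactly that every clause has exactly one true literal. Hence $P$ has a nonzero zero iff the ONE-IN-THREE 3SAT instance is satisfiable. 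Combining with Step 1: $t_h$ is locally positive definite $\iff$ $P$ is positive definite $\iff$ $P$ has no nonzero zero $\iff$ the instance is unsatisfiable.

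The substantive idea is Step 1 — the homogenization that converts \emph{local} positive definiteness of a transcendental function into \emph{global} positive definiteness of a form, so that the undecidable-looking local question becomes the (NP-hard-flavored) question of whether a quartic form is positive definite; once that is in place, Step 2 is just unwinding the definitions, and the $w=0$ case is the only thing one must not forget to check.
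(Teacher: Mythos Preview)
Your proof is correct and follows essentially the same route as the paper: both exploit that $t_h$ is a sum of squares which is homogeneous of degree four in $\sin(z)$, reduce the vanishing conditions to the boolean constraints encoding ONE-IN-THREE 3SAT, and use homogeneity to pass between local behavior at the origin and global behavior. Your explicit introduction of the form $P$ and the clean two-step decomposition (local positivity of $t_h$ $\Leftrightarrow$ positive definiteness of $P$ $\Leftrightarrow$ unsatisfiability) is a slightly more structured packaging of the same argument the paper gives.
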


\begin{proof}
To see this, first note that by construction $t(x)$ and $t_h(z)$ are sums of squares and hence nonnegative. Also, by construction, the only zeros of $t(x)$ occur when $\sin(x_i) \in \{0,1\}$. Now, suppose that the ONE-IN-THREE 3SAT instance has a satisfying assignment of variables $b^\star$. Then, we observe that $t(x)$ must have a zero if we substitute $x_i = \sin^{-1}(b_i^\star)$. Thus, by construction, $t_h(z)$ has a zero when
\begin{equation}\nonumber
\begin{array}{rll}
x_i &=& \sin^{-1}(b_i^\star)\\
y &=& \sin^{-1}(1).
\end{array}
\end{equation}

Further, since $t_h$ is homogeneous in $\sin(z)$, for all $\alpha\geq 0$, we have $t_h(x(\alpha),y(\alpha))=0$, where
\begin{equation}\nonumber
\begin{array}{rll}
x_i(\alpha)&=& \sin^{-1}(\alpha b_i^\star)\\
y(\alpha) &=& \sin^{-1}(\alpha).
\end{array}
\end{equation}
Note that as $\alpha\rightarrow 0$, $(x(\alpha),y(\alpha))$ gets arbitrarily close to the origin. Hence $t_h(z)$ is not locally positive. 


Now, to prove the converse, suppose the ONE-IN-THREE 3SAT instance is unsatisfiable. We know that at least one of the clauses in the ONE-IN-THREE 3SAT instance must have either no true literals or more than one true literal for all possible assignments to the variables. Thus, at least one of the terms of $t(x)$ must always be positive.  If $t(x)$ does not have a zero, by construction, $t_h(z)$ can only have a zero when
$\sin(y) = 0$. However, substituting $\sin(y) = 0$ in $t_h(z)$ results in the expression $\Sigma_{i}^{n} \sin^4(x_i)$.
Thus, $t_h(z)$ has a zero only when $\sin(x_i) = 0$ and $\sin(y) = 0$. Hence, in some neighborhood around the origin ($-\pi < x_i < \pi$ and $-\pi < y < \pi$), these equations cannot be satisfied and thus $t_h(z)$ is locally positive.
\end{proof}

\begin{corollary}
Checking local positivity of quartic trigonometric functions is strongly NP-hard.\footnote{Our reduction from ONE-IN-THREE 3SAT to checking local positivity is clearly polynomial in length.}
\end{corollary}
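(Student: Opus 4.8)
The plan is to read the corollary directly off the Lemma, which has already done the real work. The Lemma exhibits, for every instance $\phi$ of ONE-IN-THREE 3SAT on $n$ variables, a quartic trigonometric polynomial $t_h$ in the $n+1$ variables $z=[x,y]^T$ such that $t_h$ is locally positive (definite) if and only if $\phi$ is unsatisfiable. Equivalently, $t_h$ \emph{fails} to be locally positive exactly when $\phi$ is satisfiable, so the map $\phi \mapsto t_h$ reduces ONE-IN-THREE 3SAT to the complement of the local-positivity question. Since ONE-IN-THREE 3SAT is NP-complete \cite{GareyJohnson_Book}, a polynomial-time test for local positivity of quartic trigonometric functions would in particular decide ONE-IN-THREE 3SAT in polynomial time, so the problem cannot lie in P unless P=NP. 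Under the usual convention of Turing reductions, which is the one implicitly used throughout this area, this is exactly the assertion that checking local positivity is NP-hard.

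To upgrade ``NP-hard'' to ``strongly NP-hard'' I would check two things. First, that $\phi \mapsto t_h$ is computable in polynomial time: the universal summand $\sum_{i=1}^{n}\sin^2(x_i)(\sin(y)-\sin(x_i))^2$ depends only on $n$, and each clause of $\phi$ contributes a single squared term obtained by a fixed local rewriting (replace $b_i$ by $\sin(x_i)\sin(y)$, replace $\bar b_i$ by $\sin^2(y)-\sin(x_i)\sin(y)$, add, subtract $\sin^2(y)$, and square); expanding and collecting like products of $\sin(\cdot)$'s yields $t_h$ in the canonical coefficient representation, of size polynomial in $|\phi|$ and produced in polynomial time. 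This is the content of the footnote. Second, that the coefficients of $t_h$ have bit-length $O(\log n)$: the universal term produces only coefficients in $\{-2,-1,1\}$, while expanding a single clause's squared term produces integer coefficients bounded in absolute value by an absolute constant; summing over all clauses, each coefficient of $t_h$ is an integer of magnitude at most $O(m)$, where $m$ is the number of clauses, and $m$ is polynomially bounded in $n$ (there are at most $O(n^3)$ distinct $3$-literal clauses). Hence every coefficient has bit-length $O(\log n)$, as required for strong NP-hardness in the sense of \cite{GareyJohnson_Book}. Combining these two observations with the reduction of the previous paragraph gives the corollary.

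The only genuinely delicate point, and it is minor, is the coefficient accounting in the second step: one must notice that a monomial such as $\sin^2(x_i)\sin^2(y)$ can pick up a contribution of $+1$ from every clause containing the variable $b_i$, so that the coefficients of $t_h$ are \emph{not} absolutely bounded but only polynomially bounded in $n$; fortunately a polynomial bound is all that strong NP-hardness requires. Beyond this bit of bookkeeping there is no real obstacle, since the Lemma already supplies the entire combinatorial core of the argument.
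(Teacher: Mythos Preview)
Your proposal is correct and follows exactly the approach of the paper, which in fact offers no proof beyond the footnote: the corollary is read off directly from the preceding Lemma together with the observation that the map $\phi\mapsto t_h$ is polynomial time. Your additional bookkeeping on the coefficient sizes (to justify the word \emph{strongly}) is more careful than anything the paper spells out, and is correct; the paper leaves this implicit.
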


\begin{lemma}
If the ONE-IN-THREE 3SAT instance is unsatisfiable, there exists a neighborhood around the origin in which $\grad t_h(z)$ does not vanish (except at the origin). 
\end{lemma}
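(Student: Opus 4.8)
The plan is to reduce the claim to the positive definiteness of an associated quartic \emph{form}, which is essentially what the previous lemma established, and then to invoke Euler's identity. Write $s := \sin(z) = (\sin x_1,\dots,\sin x_n,\sin y) \in \RR^{n+1}$ (componentwise). By construction each summand of $t_h(z)$ is the square of a homogeneous quadratic in $s$, so $t_h(z) = P(\sin(z))$ for a quartic form $P:\RR^{n+1}\to\RR$ (a sum of squares of quadratic forms). When the ONE-IN-THREE 3SAT instance is unsatisfiable, the argument in the proof of the previous lemma shows that $t_h$ vanishes on the box $(-\pi,\pi)^{n+1}$ only where $\sin(x_i)=0$ for all $i$ and $\sin(y)=0$; since $\sin$ maps $(-\pi,\pi)$ onto $[-1,1]$, this says $P(s)=0$ for $s\in[-1,1]^{n+1}$ only at $s=0$, and by homogeneity (scale any putative nonzero root into the unit box) $P$ is a positive definite form on all of $\RR^{n+1}$.

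Next I would differentiate through the sine. By the chain rule, for each coordinate,
$$\pder{t_h}{z_j}(z) = \cos(z_j)\,(\partial_{s_j}P)(\sin(z)).$$
Restrict to the neighborhood $N = \{z : |z_j| < \pi/2 \text{ for all } j\}$ of the origin; there every factor $\cos(z_j)$ is strictly positive, so $\grad t_h(z)=0$ on $N$ if and only if $\grad P(\sin(z))=0$.

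Now apply Euler's identity to the form $P$ of degree $4$: $P(s) = \tfrac14\, s^T \grad P(s)$. Hence $\grad P(s)=0$ forces $P(s)=0$, and positive definiteness of $P$ forces $s=0$, i.e. $\sin(z)=0$. Finally, on $N$ the equation $\sin(z_j)=0$ holds only at $z_j=0$, so $z=0$. Therefore $\grad t_h$ vanishes nowhere on $N$ except at the origin, as claimed.

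The computation itself is routine; the only point that genuinely needs care — and the reason the construction introduced the auxiliary variable $y$ and the homogenization step — is that we need $P$ to be positive definite \emph{as a form} on all of $\RR^{n+1}$, not merely that $t_h$ is locally positive near the origin. This global homogeneity is exactly what allows Euler's identity to convert ``$\grad P$ vanishes'' into ``$P$ vanishes,'' which is the crux of the argument.
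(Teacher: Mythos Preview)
Your proposal is correct and follows essentially the same route as the paper: write $t_h(z)=P(\sin z)$ for a quartic form $P$, use the chain rule $\grad t_h(z)=\operatorname{diag}(\cos z_j)\,\grad P(\sin z)$, restrict to the box where the cosines are nonzero, and invoke Euler's identity $P(s)=\tfrac14 s^T\grad P(s)$ together with positive definiteness of $P$ to conclude that $\grad P$ (and hence $\grad t_h$) vanishes only at the origin. The only cosmetic difference is that you spell out the homogeneity argument for global positive definiteness of $P$, whereas the paper appeals directly to local positivity of $t_h$ from the previous lemma; the content is the same.
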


\begin{proof}
Let $s := [\sin(x_1), \sin(x_2), \dots \sin(y)]^T$ and denote the function $t_s(s)$ to be the function $t_h(z)$ viewed as a function of $s$. Note that $t_s$ is a quartic form in $s$. The expression for the gradient of $t_h(z)$ with respect to $z$ can be written as
$$\grad t_h(z) = \mbox{diag}(\cos(z_i)) \grad_s t_s(s)$$
where $\mbox{diag}(\cos(z_i))$ is a diagonal matrix with its diagonal elements set to $\cos(z_i)$. Using Euler's identity for homogeneous functions, we have $t_s(s) = \frac{1}{4}s^T \grad_s t_s(s)$. Thus, we see that when $t_s(s)$ is nonzero, $\grad_s t_s(s)$ cannot vanish at that point. Since for a small enough neighborhood around the origin ($-\frac{\pi}{2} < x_i < \frac{\pi}{2}$ and $-\frac{\pi}{2} < y < \frac{\pi}{2}$) we know that $t_s(s)$ is positive \emph{and} $\cos(z_i)$ is nonzero for all $i$, we have that $\grad t_h(z)$ also does not vanish in this neighborhood (except at the origin).
\end{proof}

\begin{corollary} \label{cor:np_hard_local_trig}
 Given a quartic trigonometric polynomial, it is NP-hard to decide if there exists a neighborhood around the origin where the function is positive and its gradient does not vanish (except at the origin).
\end{corollary}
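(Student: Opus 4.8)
The plan is to show that, restricted to the family of quartic trigonometric polynomials $t_h(z)$ constructed above, the decision problem in the statement is precisely the (complement of the) ONE-IN-THREE 3SAT problem, and then to invoke NP-completeness of ONE-IN-THREE 3SAT together with the fact that the map $b_1,\dots,b_n \mapsto t_h$ is polynomial in length and uses only coefficients in $\{-1,0,1\}$ (so that strong NP-hardness follows). In other words, I want to prove that $t_h$ is a ``yes'' instance of the stated problem if and only if the ONE-IN-THREE 3SAT instance from which it was built is unsatisfiable.

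First I would dispatch the easy direction. Suppose the ONE-IN-THREE 3SAT instance is unsatisfiable. Then the lemma on local positive definiteness supplies a neighborhood $N_1$ of the origin on which $t_h$ is positive away from the origin, and the subsequent lemma supplies a neighborhood $N_2$ of the origin on which $\grad t_h(z)$ vanishes only at the origin. On the neighborhood $N_1 \cap N_2$ both properties hold simultaneously, so $t_h$ is a ``yes'' instance. Conversely, I would argue the contrapositive: if the instance is satisfiable then no neighborhood works. By the local-positive-definiteness lemma, $t_h$ fails to be locally positive definite; since $t_h$ is a sum of squares it is everywhere nonnegative, so this failure means every neighborhood of the origin contains a nonzero point $\hat z$ with $t_h(\hat z)=0$ — indeed the proof of that lemma exhibits an explicit curve of such zeros limiting to the origin, using homogeneity of $t_h$ in $\sin(z)$. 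At any such $\hat z$, the differentiable function $t_h$ attains its global minimum value $0$, hence $\grad t_h(\hat z)=0$. Thus in every neighborhood of the origin $t_h$ is neither positive off the origin nor has non-vanishing gradient off the origin, so $t_h$ is a ``no'' instance. This establishes the claimed equivalence.

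Finally I would conclude the hardness. A polynomial-time algorithm for the stated decision problem would, via the polynomial-length construction of $t_h$, decide unsatisfiability of ONE-IN-THREE 3SAT in polynomial time, and hence — by complementing the answer — decide ONE-IN-THREE 3SAT itself in polynomial time, forcing P $=$ NP; since the produced coefficients are bounded, the same reduction rules out even pseudo-polynomial time algorithms, giving strong NP-hardness.

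I do not expect a genuine obstacle: the corollary is essentially a repackaging of the two preceding lemmas. The points that require a little care are (i) in the satisfiable case one must verify the property fails for \emph{all} neighborhoods, which is exactly what the curve-of-zeros argument delivers; (ii) the observation that a zero of a nonnegative differentiable function is automatically a critical point, so at those points the ``not positive'' and ``gradient vanishes'' failure modes coincide; and (iii) that deciding the stated property is polynomial-time equivalent to deciding satisfiability even though the reduction formally targets the complement.
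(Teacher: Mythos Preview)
Your proposal is correct and matches the paper's intended argument: the corollary is stated without proof precisely because it follows by combining the two preceding lemmas exactly as you do, together with the already-noted polynomial-length, bounded-coefficient reduction from ONE-IN-THREE 3SAT. One small remark: in the satisfiable case your additional observation that $\grad t_h(\hat z)=0$ at the nonzero zeros is true but unnecessary---failure of local positivity alone already kills the conjunction---so the converse is even simpler than you present it.
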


We now present a polynomial time reduction from the decision problem stated in Corollary \ref{cor:np_hard_local_trig} to the problem of checking local asymptotic stability of quartic trigonometric vector fields. This completes the proof of Theorem \ref{thm:las.trig}.

\begin{proof}[Proof of Theorem~\ref{thm:las.trig}]
Given a quartic trigonometric form $t_h(z)$, we construct the following continuous time dynamical system:
$$\dot{z} = -\grad t_h(z).$$
Observe that this a quartic trigonometric vector field. We claim that this system is locally asymptotically stable if and only if there is a neighborhood around the origin such that $t_h(z)$ is positive definite and $\grad t_h(z)$ does not vanish except at the origin. To prove the claim, we start by noting that by construction $\dot{t}_h(z)$ is always negative semidefinite:
$$\dot{t}_h(z) = \langle \grad t_h(z), -\grad t_h(z)\rangle = -\|\grad t_h(z)\|^2.$$
Suppose first that there is a neighborhood in which $t_h(z)$ is positive definite and $\grad t_h(z)$ does not vanish except at the origin. The condition on the gradient implies from the equation above that $\dot{t}_h(z)$ is locally negative definite. This, together with local positivity of $t_h$, implies that $t_h$ is a locally valid Lyapunov function for the system. Hence, local asymptotic stability follows from Lyapunov's stability theorem (see e.g.~\cite[p. 124]{Khalil:3rd.Ed}).

To see the converse, suppose that the vector field is locally asymptotically stable. There is therefore an open neighborhood $B_\delta$ around the origin, where the trajectories converge to the origin. We first observe that $\grad t_h(z)$ does not vanish in this neighborhood since if it did, $\dot{z}$ would equal zero at that point. This would contradict local asymptotic stability (since this point would be an equilibrium point and would not asymptotically converge to the origin).

Next, we prove that $t_h(z)$ is positive definite in $B_\delta$. Suppose first that there exists a point $z_0$ in $B_\delta$ such that $t_h(z_0) < 0$. Consider the trajectory that starts off at $z_0$. Since $\dot{t}_h(z) = -\|\grad t_h(z)\|^2$ is non-increasing everywhere, we see that at all points in time, the value of $t_h(z)$ remains negative and thus the trajectory cannot go to the origin (since $t_h(0) = 0$). This contradicts local asymptotic stability. To prove that $t_h(z)$ is in fact \emph{strictly} positive in $B_\delta$, suppose for the sake of contradiction that there is a point $z^\star \in B_\delta$ such that $t_h(z^\star) = 0$. Since $t_h(z)$ was just shown to be nonnegative in $B_\delta$ and $B_\delta$ is an open set, $z^\star$ is a local minimum. Thus, $\grad t_h(z^\star) = 0$ and $z^\star$ is a fixed point. This again contradicts local asymptotic stability.
\end{proof}

\section{Complexity of several qualitative properties of polynomial vector fields}\label{sec:poly}

As we remarked earlier, NP-hardness of testing local and global asymptotic stability of polynomial vector fields of degree 3 has already been established in our earlier work~\cite{AAA_ACC12_Cubic_Difficult},\cite[Chap. 4]{AAA_PhD}. In this section, we prove that deciding several other important properties of polynomial vector fields is also NP-hard. For many of these properties, our proof of NP-hardness builds on the proof in~\cite{AAA_ACC12_Cubic_Difficult}.  Whenever a property has to do with an equilibrium point, we take this equilibrium point to be at the origin. In what follows, the norm $||.||$ is always the Euclidean norm, and the notation $B_r$ denotes the ball of radius $r$; i.e., $B_r\mathrel{\mathop:}=\{x\ |\ ||x||\leq r\}$.

\begin{theorem}\label{thm:poly.hardness.results}
For polynomial differential equations of degree $d$ (with $d$ specified below), the following properties are NP-hard to decide:\\
(a) $d=3$, \emph{Invariance of a ball}: $\forall x(0)$ with $||x(0)||\leq 1$, $$||x(t)||\leq 1, \ \ \forall t\geq0.$$\\
(b) $d=1$, \emph{Invariance of a basic semialgebraic set defined by a quartic polynomial}: $\forall x(0)\in\mathcal{S}$, 
$$x(t)\in\mathcal{S}, \ \ \forall t\geq0,$$ where $\mathcal{S}\mathrel{\mathop:}=\{x\ |\ p(x)\leq 1\}$ and $p$ is a given form of degree four.\\
(c) $d=3$, \emph{Inclusion of a ball in the region of attraction of an equilibrium point}: $\forall x(0)$ with $||x(0)||\leq 1,$  $$x(t)\rightarrow 0, \ \mbox{as}\ \  t\rightarrow\infty.$$ \\
(d) $d=3$, \emph{Local attractivity of an equilibrium point}: $\exists \delta>0$ such that $\forall x(0)\in B_\delta,$ $$x(t)\rightarrow 0, \ \mbox{as}\ \  t\rightarrow\infty.$$ \\
(e) $d=4$, \emph{Stability of an equilibriym point in the sense of Lyapunov}: $\forall \epsilon>0$, $\exists\delta=~\delta(\epsilon)$ such that $$||x(0)||<\delta\ \Rightarrow ||x(t)||<\epsilon, \ \ \forall t\geq0.$$ \\
(f) $d=3$, \emph{Boundedness of trajectories}: $\forall x(0)$, $\exists r=r(x(0))$ such that
$$||x(t)||<r, \ \ \forall t\geq0.$$ \\
(g) $d=3$, \emph{Existence of a local quadratic Lyapunov function}: $\exists\delta>0$ and a quadratic Lyapunov function $V(x)=x^TPx$ such that $V(x)>0$ for all $x\in B_\delta, \ x\neq0$ (or equivalently $P\succ 0$), and $$\dot{V}(x)<0, \ \ \forall x\in B_\delta, \ x\neq0.$$\\
(h) $d=4$, \emph{Local collision avoidance}: $\exists\delta>0$ such that $\forall x(0)\in B_\delta$, $$x(t)\notin\mathcal{S}, \  \ \forall t\geq0,$$
where $\mathcal{S}$ is a given polytope.\\
(i) $d=3$, \emph{Existence of a stabilizing controller}: There exists a particular (e.g. smooth, or polynomial of fixed degree) control law $u(x)$ that makes the origin of $$\dot{x}=f(x)+g(x)u(x)$$ locally asymptotically stable, where $f$ and $g\neq 0$ here have degrees $3$.

\end{theorem}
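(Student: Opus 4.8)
The plan is to route all nine parts through two strongly NP-hard problems concerning a quartic form $p:\RR^n\to\RR$: (i) deciding whether $p$ is positive definite --- this is essentially the cubic asymptotic-stability result of our earlier work, which (exactly as in the trigonometric construction of Section~\ref{sec:trig}) can be realized by the degree-$3$ gradient system $\dot x=-\nabla p(x)$ with $p$ a sum of squares that is positive definite iff a ONE-IN-THREE 3SAT instance is unsatisfiable; and (ii) deciding whether $p$ is nonnegative everywhere, which follows from NP-hardness of matrix copositivity by writing $p(x)=\sum_{i,j}N_{ij}x_i^2x_j^2$. The analytic glue throughout is Euler's identity: along $\dot x=-\nabla p(x)$ one has $\frac{d}{dt}\|x\|^2=-2x^T\nabla p(x)=-8\,p(x)$ and $\frac{d}{dt}p(x(t))=-\|\nabla p(x(t))\|^2\le 0$, used together with Lyapunov's stability theorem. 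In every reduction the constructed coefficients are integers of bit-length $O(\log n)$, which is what gives hardness in the strong sense.

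Parts (c), (d), (g) (all $d=3$) I would reduce from positive definiteness using $\dot x=-\nabla p(x)$ directly, showing each property holds iff $p$ is positive definite. If $p$ is positive definite then $p$ is a radially unbounded strict Lyapunov function (its gradient, by Euler, vanishes only at the origin), so the system is GAS; hence (c) the unit ball lies in the region of attraction, (d) the origin is locally attractive, and (g) even $V(x)=\|x\|^2$ works since $\dot V=-8p(x)<0$ away from $0$. Conversely, if $p$ is not positive definite then, being a sum of squares, it has a nonzero global minimizer $x^\star$ with $p(x^\star)=0$ and $\nabla p(x^\star)=0$, so every $\lambda x^\star$ is an equilibrium; these sit arbitrarily close to the origin, which defeats (c) and (d), and also (g), since equilibria arbitrarily near the origin preclude local asymptotic stability and hence, by Lyapunov's theorem, preclude any local strict Lyapunov function, quadratic included.

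Parts (a), (f) ($d=3$) and (b) ($d=1$) I would reduce from nonnegativity. For (a) and (f) I again use $\dot x=-\nabla p(x)$: since $\frac{d}{dt}\|x\|^2=-8p(x)$, if $p\ge 0$ everywhere then $\|x\|^2$ is nonincreasing, so $B_1$ is invariant and all trajectories are bounded; if instead $p(x_0)<0$, scaling $x_0$ onto the unit sphere makes $\|x\|^2$ strictly increase at $t=0$ (so $B_1$ is not invariant), and along the trajectory from $x_0$ the value $p$ stays $\le p(x_0)<0$, whence $\|x(t)\|^2\ge\|x_0\|^2-8p(x_0)\,t\to\infty$ (so boundedness fails). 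Thus both properties are equivalent to nonnegativity of $p$. For (b) I would take the linear system $\dot x=x$ and $\mathcal{S}=\{x:p(x)\le 1\}$: on $x(t)=e^tx(0)$ one has $p(x(t))=e^{4t}p(x(0))$, so $\mathcal{S}$ is invariant iff no point has $0<p(x)\le 1$, i.e., by homogeneity, iff $p(x)\le 0$ for all $x$ --- nonnegativity of the quartic form $-p$.

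The remaining parts need more work. For (i) ($d=3$) I would decouple the control: put the state as $(w,\xi)$ with $\dot w=f(w)$ the cubic vector field that is locally asymptotically stable iff the 3SAT instance is unsatisfiable, $\dot\xi=-\xi+\xi^3+g_\xi(x)u$ with $g_\xi\not\equiv 0$ acting only on the $\xi$-coordinate (so $g\neq 0$); since the $w$-subsystem evolves autonomously, the closed loop is rendered locally asymptotically stable by some $u$ (e.g.\ $u\equiv 0$) iff the $w$-subsystem is, i.e.\ iff the instance is unsatisfiable. Parts (e) and (h) (both $d=4$) are where I expect the main obstacle: the bare gradient system $\dot x=-\nabla p(x)$ is stable in the sense of Lyapunov whenever $p$ is merely nonnegative, and its small-norm trajectories never leave a small ball, so neither Lyapunov stability nor escape into a fixed obstacle polytope is directly sensitive to the hard instance. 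I would therefore add to $-\nabla p(x)$ a degree-four term engineered so that the extra equilibria present exactly when $p$ is not positive definite become genuine sources, pushing nearby trajectories out along a prescribed region, while leaving the origin (globally) asymptotically stable when $p$ is positive definite; part (h) then follows by placing the obstacle polytope in that prescribed region. The delicate point --- and the step I expect to cost the most care --- is the scale-invariant, quantitative verification that this added term cannot destabilize the origin in the positive-definite case.
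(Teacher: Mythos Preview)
For parts (a), (c), (d), (g) your plan coincides with the paper's. The rest splits into genuine alternatives and one missing ingredient.

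\emph{Alternatives.} For (f) the paper reduces from the las problem rather than from nonnegativity: it takes $\dot x=-\nabla V(x)+x$ and argues that the degree-$6$ term $-\|\nabla V\|^2$ dominates the degree-$4$ cross term outside a large ball (boundedness when $V$ is positive definite), while on the ray through a boolean equilibrium $\bar x\in\{0,1\}^n$ the dynamics reduce to $\dot x=x$ (unboundedness otherwise). Your nonnegativity argument is shorter and avoids the perturbation. For (i) the paper does not decouple: it keeps the same state and chooses $g(x)=(x_1x_2^2-x_1^2x_2)\,\mathbf 1\mathbf 1^T$, a homogeneous cubic that vanishes on every ray through a point of $\{0,1\}^n$, so when $f$ is not las the spurious equilibria cannot be removed by any $u$; this yields the result uniformly over all controller classes without auxiliary coordinates. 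For (b) the paper uses $\dot x=-x$, but note that then $p(x(t))=e^{-4t}p(x(0))\le\max\{p(x(0)),0\}\le 1$ regardless of the sign of $p$, so the sublevel set is \emph{always} invariant and the claimed equivalence is problematic; your choice $\dot x=+x$ (so that $p(x(t))=e^{4t}p(x(0))$) repairs this cleanly.

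\emph{The missing step in (e) and (h).} Your diagnosis is right, and the paper's fix is simple enough that you should adopt it: the perturbation is $x^{4}\mathrel{\mathop:}=(x_1^4,\dots,x_n^4)^T$, giving $\dot x=-\nabla V(x)+x^4$. The verification you flagged as delicate collapses to a degree count: $\dot V=-\|\nabla V\|^2+\langle\nabla V,x^4\rangle$ is a negative-definite degree-$6$ form plus a degree-$7$ form, so the former dominates on a small ball and local asymptotic stability is preserved whenever $V$ is positive definite. In the other direction, the SAT construction guarantees a nonzero equilibrium $\bar x\in\{0,1\}^n$ of $-\nabla V$; since $\bar x_i^4=\bar x_i$, on the ray $\alpha\bar x$ the dynamics become $\dot\alpha=\alpha^4$, so arbitrarily small initial data escape to infinity along that ray---defeating Lyapunov stability for (e) and forcing passage through the polytope $\{x:x_i\ge 0,\ 1\le\sum_i x_i\le 2\}$ used for (h). No further scale-invariant estimate is needed.
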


\begin{proof} (a) The proofs of parts (a) and (b) of the theorem are based on a reduction from the polynomial nonnegativity problem: given a (homogeneous) polynomial $p:\mathbb{R}^n\rightarrow\mathbb{R}$, decide whether $p(x)\geq0, \forall x\in\mathbb{R}^n$? If the degree of $p$ is even and larger or equal than $4$, the problem is well-known to be NP-hard. This follows e.g. as an immediate consequence of NP-hardness testing matrix copositivity~\cite{nonnegativity_NP_hard}. (The original proof of NP-hardness of checking matrix copositivity in~\cite{nonnegativity_NP_hard} is via a reduction from the subset sum problem and only establishes weak NP-hardness. However, reductions from the stable set problem to matrix copositivity are also
known~\cite{deKlerk_StableSet_copositive} and they result in NP-hardness in the strong sense.)

We now proceed with the proof of (a). Given a quartic form $p$, we construct the vector field $$\dot{x}=-\nabla p(x).$$ Note that the vector field has degree $3$ and is homogeneous. We claim that the unit ball $B_1$ is invariant under the trajectories of this system if and only if $p$ is nonnegative. This of course establishes the desired NP-hardness result. To prove the claim, consider the function $V(x)\mathrel{\mathop:}=||x||^2.$ Clearly, $B_1$ is invariant under the trajectories of $\dot{x}=-\nabla p(x)$ if and only if $\dot{V}(x)\leq 0$ for all $x$ with $||x||=1.$ Since $\dot{V}$ is a homogeneous polynomial, this condition is equivalent to having $\dot{V}$ nonpositive for all $x\in\mathbb{R}^n$. However, from Euler's identity we have $$\dot{V}(x)=\langle 2x, -\nabla p(x) \rangle=-8p(x).$$

(b) Once again, we provide a reduction from the problem of checking nonnegativity of quartic forms. Given a quartic form $p$, we let the set $\mathcal{S}$ be defined as $\mathcal{S}=\{x\ |\ p(x)\leq 1\}$. Let us consider the linear dynamical system $$\dot{x}=-x.$$ We claim that $\mathcal{S}$ is invariant under the trajectories of this linear system if and only if $p$ is nonnegative. To see this, consider the derivative $\dot{p}$ of $p$ along the trajectories of $\dot{x}~=~-~x$ and note its homogeneity. With the same reasoning as in the proof of part (a), $\dot{p}(x)\leq 0$ for all $x\in\mathbb{R}^n$ if and only if the set $\mathcal{S}$ is invariant. From Euler's identity, we have
$$\dot{p}(x)=\langle \nabla p(x), -x \rangle=-4p(x).$$ Note that the role of the dynamics and the gradient of the ``Lyapunov function'' are swapped in the proofs of (a) and (b). 

{\bf A precursor to the proofs of (c)-(h).} The proofs of (c)-(h) are all via reductions from the problem of testing local asymptotic stability (las) of cubic vector fields. This problem has been shown to be NP-hard in~\cite{AAA_ACC12_Cubic_Difficult}. All of the reductions that follow are rather simple but subtly rely on the specific construction in~\cite{AAA_ACC12_Cubic_Difficult}. For this reason, we need to first recall some elements of that construction.

In a manner similar to that of the proof of Theorem~\ref{thm:las.trig}, the reduction in~\cite{AAA_ACC12_Cubic_Difficult} takes an instance of ONE-IN-THREE 3SAT and constructs a homogeneous polynomial $V:\mathbb{R}^n\rightarrow\mathbb{R}$ of degree $4$ that is positive definite if and only if the ONE-IN-THREE 3SAT instance is not satisfiable. Then, a cubic vector field is constructed as follows:
\begin{equation}\label{eq:xdot=f(x)=-gradV(x)}
\dot{x}\mathrel{\mathop:}=f(x)=-\nabla V(x).
\end{equation}
We recall the following facts from~\cite{AAA_ACC12_Cubic_Difficult} about this vector field:
\begin{enumerate}
\item The vector field in (\ref{eq:xdot=f(x)=-gradV(x)}) is homogeneous and therefore it is locally asymptotically stable if and only if it is globally asymptotically stable.
\item The vector field is (locally or globally) asymptotically stable if and only if $V$ is positive definite. Hence, the system, if asymptotically stable, by construction always admits a quartic Lyapunov function.
\item If the vector field is \emph{not} asymptotically stable, then there always exists a nonzero point $\bar{x}\in\{0,1\}^n$ such that $f(\bar{x})=0$; i.e., $\bar{x}$ is a nonzero equilibrium point.
\end{enumerate}
We now proceed with the proofs of (c)-(h). In what follows, $f(x)$ will always refer to the vector field in (\ref{eq:xdot=f(x)=-gradV(x)}).

(c) The claim is an obvious implication of the homogeneity of $f$. Since $f(\lambda x)=\lambda^3 f(x)$, for all $\lambda\in\mathbb{R}$ and all $x\in\mathbb{R}^n$, the origin is las if and only if for any $r$, all trajectories in $B_r$ converge to the origin.\footnote{For a general cubic vector field, validity of property (c) for a particular value of $r$ is of course not necessary for local asymptotic stability. The reader should keep in mind that the class of homogeneous cubic vector fields is a subset of the class of all cubic vector fields, and hence any hardness result for this class immediately implies the same hardness result for all cubic vector fields.}

(d) If $f$ is las, then of course it is by definition locally attractive. On the other hand, if $f$ is not las, then $f(\bar{x})=0$ for some nonzero $\bar{x}\in\{0,1\}^n$. By homogeneity of $f$, this implies that $f(\alpha\bar{x})=0, \ \forall\alpha\geq 0.$ Therefore, arbitrarily close to the origin we have stationary points and hence the origin cannot be locally attractive.

(e) Let $x^4=(x_1^4,\ldots, x_n^4)^T.$ Consider the vector field
\begin{equation}\label{eq:xdot=f(x)+x^4}
\dot{x}=f(x)+x^4.
\end{equation}
We claim that the origin of (\ref{eq:xdot=f(x)+x^4}) is stable in the sense of Lyapunov if and only if the origin of (\ref{eq:xdot=f(x)=-gradV(x)}) is las. Suppose first that (\ref{eq:xdot=f(x)=-gradV(x)}) is not las. Then we must have $f(\alpha\bar{x})=0$ for some nonzero $\bar{x}\in\{0,1\}^n$ and $\forall\alpha\geq 0$. Therefore for the system (\ref{eq:xdot=f(x)+x^4}), trajectories starting from \emph{any} nonzero point on the line connecting the origin to $\bar{x}$ shoot out to infinity while staying on the line. (This is because on this line, the dynamics are simply $\dot{x}=x^4.$) As a result, stability in the sense of Lyapunov does not hold as there exists an $\epsilon>0$ (in fact for any $\epsilon>0$), for which $\nexists\delta>0$ such that trajectories starting in $B_\delta$ stay in $B_\epsilon$. Indeed, as we argued, arbitrarily close to the origin we have points that shoot out to infinity. 

Let us now show the converse. If (\ref{eq:xdot=f(x)=-gradV(x)}) is las, then $V$ is indeed a strict Lyapunov function for it; i.e. it is positive definite and has a negative definite derivative $-||\nabla V(x)||^2$. Using the same Lyapunov function for the system in (\ref{eq:xdot=f(x)+x^4}), we have $$\dot{V}(x)=-||\nabla V(x)||^2+\langle \nabla V(x), x^4\rangle.$$ Note that the first term in this expression is a homogeneous polynomial of degree $6$ while the second term is a homogeneous polynomial of degree $7$. Negative definiteness of the lower order term implies that there exists a positive real number $\delta$ such that $\dot{V}(x)<0$ for all nonzero $x\in B_\delta$. This together with positive definiteness of $V$ implies via Lyapunov's theorem that (\ref{eq:xdot=f(x)+x^4}) is las and hence stable in the sense of Lypunov.

(f) Consider the vector field
\begin{equation}\label{eq:xdot=f(x)+x}
\dot{x}=f(x)+x.
\end{equation}
We claim that the trajectories of (\ref{eq:xdot=f(x)+x}) are bounded if and only if the origin of (\ref{eq:xdot=f(x)=-gradV(x)}) is las. Suppose (\ref{eq:xdot=f(x)=-gradV(x)}) is not las. Then, as in the previous proof, there exists a line connecting the origin to a point $\bar{x}\in\{0,1\}^n$ such that trajectories on this line escape to infinity. (In this case, the dynamics on this line is governed by $\dot{x}=x$.) Hence, not all trajectories can be bounded. For the converse, suppose that (\ref{eq:xdot=f(x)=-gradV(x)}) is las. Then $V$ (resp. $-||\nabla V(x)||^2$) must be positive (resp. negative) definite. Now if we consider system (\ref{eq:xdot=f(x)+x}), the derivative of $V$ along its trajectories is given by $$\dot{V}(x)=-||\nabla V(x)||^2+\langle \nabla V(x), x\rangle.$$ Since the first term in this expression has degree $6$ and the second term degree $4$, there exists an $r$ such that $\dot{V}<0$ for all $x\notin B_r$. This condition however implies boundedness of trajectories; see e.g.~\cite{Higher_Derive_Lagrange_Stability}.

(g) If $f$ is not las, then there cannot be any local Lyapunov functions, in particular not a quadratic one. If $f$ is las, then we claim the quadratic function $W(x)=||x||^2$ is a valid (and in fact global) Lyapunov function for it. This can be seen from Euler's identity $$\dot{W}(x)=\langle 2x, -\nabla V(x)  \rangle=-8V(x),$$
and by noting that $V$ must be positive definite.

(h) We define our dynamics to be the one in (\ref{eq:xdot=f(x)+x^4}), and the polytope $\mathcal{S}$ to be $$\mathcal{S}=\{x\ |\ x_i\geq 0, 1\leq \sum_{i=1}^n x_i \leq 2\}.$$ 

Suppose first that $f$ is not las. Then by the argument given in (e), the system in (\ref{eq:xdot=f(x)+x^4}) has trajectories that start out arbitrarily close to the origin (at points of the type $\alpha\bar{x}$ for some $\bar{x}\in\{0,1\}^n$ and for arbitrarily small $\alpha$), which exit on a straight line to infinity. Note that by doing so, such trajectories must cross $\mathcal{S}$; i.e. there exists a positive real number $\bar{\alpha}$ such that $1\leq \sum_{i=1}^n \bar{\alpha}\bar{x}_i \leq 2.$ Hence, there is no neighborhood around the origin whose trajectories avoid $\mathcal{S}$. 

For the converse, suppose $f$ is las. Then, we have shown while proving (e) that (\ref{eq:xdot=f(x)+x^4}) must also be las and hence stable in the sense of Lyapunov. Therefore, there exists $\delta>0$ such that trajectories starting from $B_\delta$ do not leave $B_{\frac{1}{2}}$---a ball that is disjoint from $\mathcal{S}$.

(i) Let $f$ be as in (\ref{eq:xdot=f(x)=-gradV(x)}) and $g(x)=(x_1x_2^2-x_1^2x_2)\boldsymbol{1}\boldsymbol{1} ^T$, where $\boldsymbol{1}$ denotes the vector of all ones. The following simple argument establishes the desired NP-hardness result irrespective of the type of control law we may seek (e.g. linear control law, cubic control law, smooth control law, or anything else). If $f$ is las, then of course there exists a stabilizing controller, namely $u=0$. If $f$ is not las, then it must have an equilibrium point at a nonzero point $\bar{x}\in\{0,1\}^n$. Note that by construction, $g$ vanishes at all such points. Since $g$ is homogeneous, it also vanishes at all points $\alpha\bar{x}$ for any scalar $\alpha$. Therefore, arbitrarily close to the origin, there are equilibrium points that the control law $u(x)$ cannot possibly remove. Hence there is no controller that can make the origin las.
\end{proof}

\begin{remark}
Arguments similar to the one presented in the proof of (i) above can be given to show NP-hardness of deciding existence of a controller that establishes several other properties, e.g., invariance of the unit ball, inclusion of the unit ball in the region of attraction, etc. In the statement of (h), the fact that the set $\mathcal{S}$ is a polytope is clearly arbitrary. This choice is only made because ``obstacles'' are most commonly modeled in the literature as polytopes. We also note that a related problem of interest here is that of deciding, given two polytopes, whether all trajectories starting in one avoid the other. This question is the complement of the usual reachability question, for which claims of undecidability have already appeared~\cite{Undecidability_vec_fields_survey}; see also~\cite{Skolem_Pisot_problem_CT}.
\end{remark}

\section{Conclusions and Open problems}
Under the assumption P$\neq$NP, we have shown the impossibility of polynomial time (or even pseudo-polynomial time) algorithms for ten decision problems that ubiquitously arise in control theory and the study of continuous time dynamical systems. Although our hardness results are valid even for very restricted classes of systems (e.g. gradient systems), it is of course still possible that these decision problems admit polynomial time algorithms for other special (and possibly important) \emph{subclasses} of polynomial or trigonometric vector fields.

Aside from extending the results of Theorem~\ref{thm:poly.hardness.results} to other classes of vector fields (such as trigonometric ones), the obvious class of questions that our work leaves open is to investigate the \emph{decidability} of the decision problems studied in Theorem~\ref{thm:poly.hardness.results}, or their NP-hardness for polynomial vector fields of degree one or two. Although for linear systems some of these questions become easy, we expect that our hardness results can be strengthened to the case when the degree is $2$. Quadratic vector fields already demonstrate very complex behaviour; for example, their stability does not imply existence of a polynomial Lyapunov function of any degree~\cite{AAA_MK_PP_CDC11_no_Poly_Lyap}. In general, one can reduce the degree of any vector field to two by introducing polynomially many new variables (see~\cite{Undecidability_vec_fields_survey}). However, this operation may or may not preserve the property of the vector field which is of interest.

\bibliographystyle{abbrv}
\bibliography{pablo_amirali}

\end{document}